\numberwithin{equation}{section}
\theoremstyle{plain}
        \newtheorem{theorem}{Theorem}[section]
        \newtheorem{lemma}[theorem]{Lemma}
        \newtheorem{remark}[theorem]{Remark}
\newtheorem*{theorem*}{Theorem}
\newtheorem*{definition*}{Definition}
\newtheorem*{proposition*}{Proposition}
\newcommand{\R}{\mathbb{R}}
\newcommand{\cH}{\mathcal{H}}
\def\aS{\mathsf S}
\def\aZ{\mathsf Z}
\def\az{\mathsf z}
\def\aF{\mathsf F}
\def\aG{\mathsf G}
\def\aJ{\mathsf J}
\def\av{\mathsf v}
\def\aw{\mathsf w}
\def\cS{\mathcal S}
\def\cL{\mathcal{L}}
\def\aL{\mathsf L}
\def\aM{\mathsf M}
\def\aaE{\mathsf E}
\def\cP{\mathbb{P}}
\newcommand\supp{\operatorname{Supp}}
\renewcommand{\d}{\partial} 
\newcommand{\dd}{{\,\rm d}}
\title{GENERIC formulation and small-angle limit for Kinetic wave equations}
\author[H.~Duong]{Manh Hong Duong}
\author[Z.~He]{Zihui He}
\address[H.~Duong]
{School of Mathematics, University of Birmingham, UK}
\email{h.duong@bham.ac.uk}
\address[Z.~He]
{Fakult\"at f\"ur Mathematik, Universit\"at Bielefeld, Postfach 100131, 33501 Bielefeld, Germany}
\email{zihui.he@uni-bielefeld.de*}
\date{\today}
\keywords{Wave kinetic equation, GENERIC system, small-angle limit}
\subjclass[2020]{35Q20, 82C05, 35B25}
\begin{document}

\begin{abstract}
In this paper, we formulate the  three-wave and four-wave kinetic equations into the GENERIC framework and formally derive a small-angle limit for the four-wave equation. This limit is akin to the well-known grazing limit from the kinetic Boltzmann equation to the kinetic Landau equation. We also show the GENERIC structure of the limiting system.
\end{abstract}

\maketitle

\section{Introduction}

\subsection{GENERIC framework}\label{subsec:GENERIC}
In non-equilibrium thermodynamics, 
GENERIC (acronym for General Equation for Non-Equilibrium Reversible-Irreversible Coupling), which was first introduced in the context of complex fluids  \cite{GO97a,GO97b}, describes a class of thermodynamic evolution equations for  systems with both reversible and irreversible dynamics. It is formulated in terms of an energy functional, an entropy functional, and two geometric structures for the Hamiltonian and dissipative dynamics, such that it automatically fulfils the first and second laws of thermodynamics by ensuring conservation of energy and monotonic increase of entropy. Over the last three decades, the GENERIC framework has proved to be a versatile modelling tool for a variety of complex systems in physics and engineering, ranging from anisotropic inelastic solids to viscoplastic solids and thermoelastic dissipative materials. In recent years, there has been a considerable progress on establishing a mathematically rigorous foundation for the GENERIC framework \cite{mielke2011formulation,DPZ13,kraaij2020fluctuation,duong2023non,mielke2024deriving,EH25,DuongHe2025}, focusing on a rigorous derivation of GENERIC from Hamiltonian/stochastically interacting particle systems and on variational formulation for GENERIC systems. We refer the reader to the monographs \cite{Ott05,pavelka2018multiscale} for a detailed account, including many successful applications, of GENERIC. 

The main motivation for our current work is the GENERIC structure for the Boltzmann and Landau equations, which are two fundamental equations in the kinetic theory, and their connection via the so-called grazing limit. The GENERIC formulation for the kinetic Boltzmann equation has been shown formally in \cite{Ott97,grmela2018generic} and is proved rigorously in \cite{Erb23}
(for the spatially homogeneous model) and in \cite{EH25} (for the fuzzy model). The GENERIC structure for the kinetic Landau equation is only established recently in our works \cite{DuongHe2025} (formally for the classical model and rigorously for the fuzzy one). Furthermore, it has been known that the Landau equation can be obtained from the Boltzmann equation in the grazing limit, namely when the angle of collisions tends to zero \cite{he2014asymptotic,villani1998new,godinho2013asymptotic,carrillo2024landau}.

Our aim is to show the GENERIC structure of the \textit{three-wave and four-wave kinetic equations}, which are central equations in the theory of wave turbulence. The wave kinetic equations are akin to the aforementioned Boltzmann equation in the statistical kinetic theory of particle systems. We also derive a similar small-angle limit, which is analogous to the grazing limit of the Boltzmann equation, for the four-wave equation.
\subsection{Wave turbulence theory and kinetic wave equations}
Wave turbulence theory refers to the statistical theory of weakly nonlinear dispersive waves. The central object in this theory is the wave kinetic equations (WKE). These equations describe the eﬀective behavior of a nonlinear wave system, where waves interact nonlinearly according to time-reversible dispersive or wave equations. 
This theory can be traced back to the works of
Peierls \cite{peierls1929kinetischen} on anharmonic crystals and of Hasselmann \cite{hasselmann1962non,hasselmann1963non}
on the energy spectrum of water waves. It was systematically
investigated by Zakharov and his collaborators \cite{zakharov2012kolmogorov}, notably on the Kolmogorov-Zakharov spectra, which is analogous to the famous Kolmogorov spectrum in hydrodynamic turbulence, which describes how energy cascades from large to small scales in a fluid.
Over the last decades, the theory has found important practical applications in a variety of areas, including oceanography and plasma physics, to
mention a few. More recently, in mathematics literature, there have been significant breakthroughs in the rigorous derivation of the wave kinetic equations from the nonlinear Schrödinger \cite{DH21,DH23,DH23b,buckmaster2021onset,hani2024inhomogeneous}. We also refer to \cite{zakharov2012kolmogorov,nazarenko2011wave} for a detailed exposition of the wave turbulence theory.

\subsection{GENERIC formulation of kinetic wave equations}
The wave kinetic equations share many similar properties with Boltzmann’s kinetic equation. Notably, they can both be formulated via collision operators and satisfy the conservation of energy and $H$-theorems---the corresponding (physically relevant) entropy functional associated to each equation increases over time. It is also worth noticing that the conservation of the energy and increase of the entropy functionals are also the key features of the GENERIC equation. 

Motivated by these striking similarities,  the important fact that the Boltzmann equation can be described by the GENERIC framework and the discussion at the end of Section \ref{subsec:GENERIC}, we ask the following natural questions: 
\begin{itemize}
    \item Can the WKE also be formulated into the GENERIC formalism?
    \item Can one derive a small-angle limit for the WKE, akin to the grazing limit from the Boltzmann equation to the Landau equation?
    \item Can the limiting system also be cast into the GENERIC framework?
\end{itemize}
The novelties of this paper are affirmative answers to these questions. We also discuss their implications and possible research directions for future works.
\subsection{Organisation of the paper} The rest of the paper is planned as following: In Section \ref{sec:GENERIC}, we review the GENERIC framework, the three-wave and four-wave equations, and formulate the latter into the former. In Section \ref{sec:limit}, we show the small-angle limit for the four-wave
kinetic equations. Finally, further discussion and open problems for future research are presented in Section \ref{sec: conclusion}.

\subsection*{Acknowledgements}
M. H. D is funded by an EPSRC Standard Grant EP/Y008561/1. Z.~H. is funded by the Deutsche Forschungsgemeinschaft (DFG, German Research Foundation) – Project-ID 317210226 – SFB 1283.

\section{GENERIC formulation of kinetic wave equations}\label{sec:GENERIC}
The three- and four-wave kinetic equations are presented in Section \ref{sub-sec:WKE}. The general GENERIC framework is recalled in Section \ref{sub-sec:generic}. The GENERIC formulations of the three- and four-wave kinetic equations are provided in Sections \ref{sub-sec:3} and \ref{sub-sec:4}.
\subsection{Wave kinetic equations}
\label{sub-sec:WKE}
\subsubsection{Three-wave kinetic equation}
The three-wave kinetic equation governs the evolution of wave energy in systems where weakly nonlinear, resonant interactions between three waves lead to energy transfer, constrained by the conservation of both momentum and frequency. It is given by
\begin{equation}
\label{3WKE}
\partial_t f+\nabla_v\omega\cdot\nabla_x f=Q_3(f),   
\end{equation}
where 
\begin{equation*}
Q_3 (f)=\pi\int_{\mathbb{R}^{2d}} \Big[R_{v,v_1,v_2}(f)-R_{v_1,v,v_2}(f)-R_{v_2,v,v_1}(f)\Big]dv_1dv_2,    
\end{equation*}
with
\begin{equation}
\label{R3}
 R_{v,v_1,v_2}(f):=|V_{v,v_1,v_2}|^2\delta_d(v-v_1-v_2)\delta_1(\omega-\omega_1-\omega_2)(f_1f_2-ff_1-ff_2),   
\end{equation}
with the short-hand notations: $f=f(t,x,v), \omega=\omega(v)$ and $f_j=f(t,x,v_j)$ for $j\in\{1,2\}$, $\omega_j=\omega(v_j)$. The quantity $\omega(v)$ denotes the dispersion relation of the waves. For example, the Schrödinger dispersion relation is given by $\omega(v)=|v|^2$, and the Bogoliubov dispersion relation is given by $\omega(v)=\sqrt{C_1|v|^2+C_2|v|^4}$, $C_1,C_2>0$. Other examples of physically relevant dispersion relations can be found in \cite{germain2020optimal}. In \eqref{R3}, $\delta_d$ and $\delta_1$ denote the delta-distribution of dimensions $d$ and $1$. The equation describes, under the assumption of weak nonlinearities, the spectral energy transferred on the resonant manifold, which is a set of wave vectors satisfying
\[
v=v_1+v_2,\quad \omega=\omega_1+\omega_2.
\]
The exact form of the collision kernel $V_{v,v_1,v_2}$ depends on the type of waves under consideration.
\subsubsection{Four-wave kinetic equation}
The four-wave kinetic equation, in contrast to the three-wave equation, describes energy transfer via resonant interactions among four waves (wave quartets), and it applies in systems where three-wave interactions are forbidden (e.g., by symmetry or dispersion constraints), such as in deep-water gravity waves or nonlinear optics. The four-wave kinetic equation is given by
\begin{equation}
\label{4WKE}
\partial_t f+\nabla_v\omega\cdot\nabla_x f=Q(f),   
\end{equation}
where 
\begin{align*}
    Q(f)=&4\pi \int_{\R^{3d}}|V|^2\delta_d(v+v_2-v_1-v_3)\delta_1(\omega+\omega_2-\omega_1-\omega_3)\\
    &(f^{-1}+f_2^{-1}-f_1^{-1}-f_3^{-1})ff_1f_2f_3\dd v_1\dd v_2\dd v_3,
\end{align*}
with the short-hand notations: $f=f(t,x,v), \omega=\omega(v)$ and $f_j=f(t,x,v_j)$ for $j\in\{1,2,3\}$, $\omega_j=\omega(v_j)$. The quantity $\omega(v)$ again denotes the dispersion relation of the waves. The equation describes, under the assumption of weak nonlinearities, the spectral energy transferred on the resonant manifold, which is a set of wave vectors satisfying
\[
v+v_2=v_1+v_3\quad\text{and}\quad \omega+\omega_2=\omega_1+\omega_3.
\]
The exact form of the collision kernel $V$ depends on the type of waves under consideration. In this paper, we write $V_{v123}=V(v,v_1,v_2,v_3)$, and assume the symmetric property that   $V_{v123}=V_{1v32}=V_{23v1}$.

We refer to \cite{zakharov2012kolmogorov,nazarenko2011wave} for more information on wave kinetic equations and wave turbulence theory.

\subsection{GENERIC formulation}
\label{sub-sec:generic}

A GENERIC system describing the evolution of an unknown $\mathsf z$ in a state space $\mathsf Z$ is given by
\begin{equation}
\label{eq:generic}
\partial_t \az = \underbrace{\aL \dd\aaE}_{\text{reversible dynamics}}+\underbrace{\aM \dd\aS}_{\text{irreversible dynamics}}.
\end{equation}
The key feature of \eqref{eq:generic} is the splitting of the evolution in to reversible and irreversible dynamics using two operators $\aL$ and $\aM$ and two functionals $\aaE$ and $\aS$. More specifically:
\begin{itemize}
\item The functionals $\aaE,\aS:\aZ\to \R$ are interpreted as, energy and entropy functionals  respectively; and $\dd\aaE$, $\dd\aS$ are their differentials.
\item $\aL(\az)$, $\az\in \aZ$ are antisymmetric operators mapping cotangent to tangent vectors and satisfy the Jacobi identity
\begin{equation*}\label{eq:jacobi}
\{\{\aG_1,\aG_2\}_{\aL},\aG_3\}_{\aL}+\{\{\aG_2,\aG_3\}_{\aL},\aG_1\}_{\aL}+\{\{\aG_3,\aG_1\}_{\aL},\aG_2\}_{\aL}=0,
\end{equation*}
for all functions $\aG_i:\aZ\to \R$, $i=1,2,3$, where the Poisson bracket $\{\cdot,\cdot\}_{\aL}$ is defined as follows:
$$\{\aF,\aG\}_{\aL}=\dd\aF\cdot\aL \dd\aG.$$
Here, $\cdot$ signifies the duality pairing of the cotangent and tangent spaces of $\aZ$. The antisymmetry of $\aL$ means that $\av\cdot \aL\aw= -\aw^*\cdot \aL\av^*$, where $\av^*,\aw^*$ refer to the dual vectors of $\av,\aw$;
\item $\aM(\az)$, $\az\in \aZ$ are symmetric and positive semi-definite operators mapping cotangent to tangent vectors in the sense that
$$\av\cdot \aM\aw=\aw^*\cdot \aM\av^*,\quad \av^*\cdot \aM\av\geq 0\quad\text{for any }\av,\,\aw;$$
\item The following degeneracy (non-interaction) conditions are satisfied:
\begin{equation*}
\label{eq:degen}
\aL(\az) \dd \aS(\az)=0\quad\text{and}\quad \aM(\az) \dd\aaE(\az) = 0\quad\text{for all }\az.
\end{equation*}
\end{itemize}
Note that pure Hamiltonian systems and pure (dissipative) gradient flow systems are special cases of GENERIC corresponding to $\aM\equiv 0$ and $\aL\equiv 0$, respectively. 

The conditions satisfied by the building blocks $\{\aaE, \aS, \aL,\aM\}$ ensure that in any solution to \eqref{eq:generic}, the energy $\aaE$ is conserved and the entropy $\aS$ is non-decreasing. In fact, 
\begin{align*}
\frac{\dd}{\dd t} \aaE(\mathsf z_t) = \partial_t z\cdot \dd\aaE=(\aL \dd\aaE+\aM \dd\aS)\cdot \dd\aaE=\underbrace{\aL \dd\aaE\cdot \dd \aaE}_{=0}+\underbrace{\aM \dd\aS \cdot \dd \aaE}_{=0}=0,
\end{align*}
and by a similar computations,
$$
\frac{\dd}{\dd t} \aS(\az_t) = \dd \aS\cdot \aM \dd\aS \ge0.$$
Thus, the first and second laws of thermodynamics are automatically justified for GENERIC systems. We refer to \cite{Ott05,pavelka2018multiscale} for more information on the GENERIC framework.

We now bridge the two previous subsections together by formulating the three-wave and four-wave kinetic equations into the GENERIC framework, starting with the three-wave equation.
\subsection{GENERIC formulation of the three-wave kinetic equation}\label{sub-sec:3}
We define a discrete gradient operator $\overline\nabla_3$ for any $\phi=\phi(x,v)$ by
\begin{align}
\label{**}
\overline\nabla_3\phi=\overline\nabla_3\phi(x,v,v_1,v_2)=\phi_1+\phi_2-\phi, 
\end{align}
where we denote $\phi=\phi(x,v)$ and $\phi_i=\phi_i(x,v_i)$, $i\in\{1,2\}$. Let $G=G(x,v,v_1,v_2)$. We have the following integration by parts formula 
\begin{align*}
    \int_{\R^{4d}}G\cdot \overline \nabla_3 \phi\dd v\dd x\dd v_1\dd v_2=-\int_{\R^{2d}}\overline\nabla_3\cdot G \phi\dd v\dd x,
\end{align*}
where $\overline\nabla_3\cdot G$, which is a discrete divergence operator associated to $\overline\nabla_3$, is given by 
\begin{align}
\label{*}
\overline\nabla_3\cdot G(x,v)=\int_{\R^{2d}}G(x,v,v_1,v_2)-G(x,v_1,v,v_2)-G(x,v_2,v_1,v)\dd v_1\dd v_2.    
\end{align}
Using this discrete divergence operator, the collision operator in the three-wave kinetic equation \eqref{3WKE}, $Q_3(f)$, can be written as 
\begin{align*}
    Q_3(f)=-\overline\nabla_3\cdot \big(\delta_0\hat \delta_0|V_{v,v_1,v_2}|^2ff_1f_2\overline\nabla_3(f^{-1})\big),
\end{align*}
where we use the notations $\delta_0=\delta_d(v+v_2-v_1)$ and $\hat\delta_0=\delta_1(\omega+\omega_2-\omega_1)$.  

We have the following weak formulation for the wave kinetic equation \eqref{3WKE}
\begin{align*}
   &\int_{\R^{2d}}\phi_0f_0\dd v\dd x- \int_0^T\int_{\R^{2d}}(\d_t\phi +\nabla_v\omega\cdot \nabla_x\phi)f\dd v\dd x\dd t\\
   &=-\int_0^T\int_{\R^{2d}}\delta_0\hat \delta_0|V_{v,v_1,v_2}|^2ff_1f_2\overline\nabla_3\phi\cdot \overline\nabla_3 f^{-1}\dd v\dd x\dd t\quad\forall \phi\in C^\infty_c([0,T)\times\R^{2d}).
\end{align*}
Since $\overline\nabla_3(v,\omega)=0$, at least formally, we have the following momentum and energy conservation laws
\begin{align*}
    \int_{\R^{2d}}(v,\omega)f_t\dd v\dd x=\int_{\R^{2d}}(v,\omega)f_0\dd v\dd x\quad \text{for all $t\in[0,T]$}.
\end{align*}
We define the entropy
\begin{equation}
\label{eq: log entropy}
\cH(f)=\int_{\R^{2d}}\log f\dd v\dd x.    
\end{equation}
We take $\phi=\log f$ in the weak formulation to derive the following $\cH$-theorem
\begin{align}
\label{H}
    \frac{d}{dt}\cH(f)=\mathcal{D}_3(f),
\end{align}
where the entropy dissipation is defined as
\begin{align*}
    \mathcal{D}_3(f)=\int_{\R^{4d}}\delta_0\hat \delta_0|V_{v,v_1,v_2}|^2ff_1f_2|\overline \nabla_3 f^{-1}|^2\dd v\dd x\dd v_1\dd v_2\ge0.
\end{align*}

We consider the space $\aZ$ to be the space that consists of all Schwartz functions $f\in \cS(\R^{2d})$ such that $f\cL\in\cP(\R^{2d})$. The space is endowed with the $L^2$-inner product $\langle f,g\rangle=\int_{\R^{2d}}fg\dd v\dd x$. We formally consider the tangent and co-tangent space at $f\in\aZ$ as $T_f\aZ=\{\xi\in L^2(\R^{2d})\mid \int_{\R^{2d}}\xi\dd v\dd x=0\}$ and $T_f^*\aZ=L^2(\R^{2d})$.

In the case of the wave kinetic equation \eqref{4WKE}, we define the energy and entropy functional by 
\begin{equation*}
    \label{ES}
\begin{aligned}
&\aaE(f)=\int_{\R^{2d}} \omega f\dd v\dd x\quad\text{and}\quad \aS(f)=\cH(f).
\end{aligned}
\end{equation*}
We define the operators $\aL$ and $\aM$ at $f\in\aZ$ by
\begin{equation*}
    \label{ML}
\begin{aligned}
\aM(f)g&=-\pi\overline{\nabla}_3\cdot\Big(\delta_0\hat \delta_0|V_{v,v_1,v_2}|^2ff_1f_2\overline{\nabla}_3 g\Big)\quad\text{and}\\
\aL(f)g&=-\nabla\cdot(f \aJ\nabla g),\quad \aJ=\begin{pmatrix}
    0& \mathsf{id}_d\\
    -\mathsf{id}_d&0
\end{pmatrix}
\end{aligned}
\end{equation*}
for all $g\in \aZ$, where $\nabla =(\nabla_x,\nabla_v)^T$ denotes the usual gradient operator on the phase space.

We will check directly that the three-wave equation can be cast into the GENERIC framework \eqref{eq:generic}
with these building blocks. We first verify that they satisfy the GENERIC conditions. Indeed, $\aL$ is anti-symmetric and satisfies the Jacobi identity, see  \cite{DPZ13}
 \begin{align*}
&\langle \aL(f)g,g\rangle=\int_{\R^{2d}} f \nabla g \aJ\nabla g=0
\end{align*}
for all $g\in\aZ$.
 The operator $\aM$ is symmetric, positive semi-definite since, using the integration by parts formula, we have
\begin{align*}
\langle \aM(f)g,h\rangle&=-\pi\int_{\R^{2d}}\overline{\nabla}_3\cdot\Big(\delta_0\hat \delta_0|V_{v,v_1,v_2}|^2ff_1f_2 \overline{\nabla}_3 g\Big) h\\
&=\pi\int_{\R^{4d}}\delta_0\hat \delta_0|V_{v,v_1,v_2}|^2ff_1f_2 \overline{\nabla}_3 g\cdot \overline{\nabla}_3 h=\langle \aM(f)h,g\rangle
\\\text{and}\quad   \langle \aM(f)g,g\rangle&=\pi\int_{\R^{4d}} \delta_0\hat \delta_0|V_{v,v_1,v_2}|^2ff_1f_2 |\overline{\nabla}_3g|^2\ge 0
\end{align*}
for all $g,h\in\aZ$.

\medskip

We indeed recover the wave kinetic equation \eqref{4WKE} from the GENERIC equation, since
\begin{itemize}
    \item $\dd\aaE(f)=\omega(v)$ and 
    \begin{align*}
        \aL(f)\dd\aaE(f)&=-\frac12\nabla\cdot \big(f \aJ\nabla \omega\big)=-\nabla_v\omega\cdot\nabla_x f,
    \end{align*}
    \item $\dd\aS(f)=f^{-1}$ and
    \begin{align*}
        \aM(f)\dd\aS(f)&=-\pi\overline\nabla_3\cdot\Big(\delta_0\hat \delta_0|V_{v,v_1,v_2}|^2ff_1f_2\overline\nabla_3 f^{-1}\Big).
    \end{align*}
\end{itemize}
It remains to show the non-interaction conditions. We have, by the anti-symmetry of $\aJ$ and the definition of $\overline\nabla_3$:
\begin{align*}
\aL(f)\dd\aS(f)&=-\nabla\cdot\big(f\aJ\nabla f^{-1}\big) =-\nabla\cdot(-\nabla_v\log f,\nabla_x\log f)^T=0\\
\text{and}\quad\aM(f)\dd \aaE(f)&=-\pi\overline\nabla_3\cdot\Big(\delta_0\hat \delta_0|V_{v,v_1,v_2}|^2ff_1f_2 \overline\nabla_3 \omega\Big)=0.
\end{align*}

\subsection{GENERIC formulation of the four-wave kinetic equation}\label{sub-sec:4}

We first show the GENERIC formulation in the case of $\omega(v)=|v|^2$. In this case, the four-wave kinetic equation \eqref{4WKE} becomes
\begin{align}
\label{WKE}
    (\d_t+v\cdot\nabla_x)f=Q(f).
\end{align}
For $v,v_*\in\R^d$, the conservation of momentum and energy $v+v_*=v_*'+v'$ and $|v|^2+|v_*|^2=|v_*'|^2+|v'|^2$ imply that, similar to Boltzmann equations,  $v_*',v'$ can be parametrised as follows~\cite{ampatzoglou2025inhomogeneous,ampatzoglou2024scattering}
\begin{align*}
v'=\frac{v+v_*}{2}+\frac{|v-v_*|}{2}\sigma\quad\text{and}\quad v_*'= \frac{v+v_*}{2}-\frac{|v-v_*|}{2}\sigma \quad \sigma\in S^{d-1}.
\end{align*}
We write 
\begin{align*}
    f_*=f(x,v_*),\quad f'=f(x,v')\quad\text{and}\quad 
f_*'=f(x,v_*').
\end{align*}
Then $Q(f)$ can be written as 
\begin{align*}
    Q(f)=4\pi\int_{\R^d\times  S^{d-1}}|V|^2ff_* f'f'_*(f^{-1}+f_*^{-1}-(f')^{-1}-(f'_*)^{-1})\dd\sigma \dd v_*. 
\end{align*}

We define $\overline\nabla$ for any $\phi=\phi(x,v)$ by
\begin{align}
\label{nabla-4} \overline\nabla\phi=\phi'+\phi'_*-\phi_*-\phi, 
\end{align}
where as above we denote $\phi_*=\phi(x,v_*),~\phi'=\phi(x,v')$ and $ 
\phi_*'=\phi(x,v_*')$. Let $G=G(x,v,v_*,v',v_*')$. We have the following  integration by parts formula
\begin{equation*}
\label{IP}
    \int_{\R^{2d}\times S^{d-1}}G\cdot \overline \nabla \phi \dd\sigma\dd v_*\dd v\dd x=-\int_{\R^{2d}}(\overline\nabla\cdot G)\phi\dd v \dd x,
\end{equation*}
where the divergence $\overline\nabla\cdot G$ is given by
\begin{align*}
\overline\nabla\cdot G(x,v)&=\int_{\R^{2d}\times S^{d-1}}\big(G(x,v,v_*,v',v_*')+G(x,v_*,v,v_*',v')\\
&-G(x,v',v_*',v,v_*)-G(x,v_*',v',v_*,v)\big)\dd \sigma\dd v_*.
\end{align*}
We assume that $|V|^2$ is invariant under the transformations between $v,v_*,v'$ and $v_*'$, then $Q(f)$ can be written as 
\begin{equation*}
\label{Q:bar}
    Q(f)=-\pi\overline\nabla\cdot \big(|V|^2ff_* f'f'_*\overline\nabla (f^{-1})\big). 
\end{equation*}

We have the following weak formulation for the 4-wave kinetic equation \eqref{WKE}
\begin{align*}
   & \int_{\R^{2d}}\phi_0f_0\dd v\dd x-\int_0^T\int_{\R^{2d}}(\d_t\phi +v\cdot\nabla_x\phi)f\dd v\dd x\dd t\\
   &=-\pi\int_0^T\int_{\R^{2d}}|V|^2ff_* f'f'_*\overline\nabla\phi\cdot \overline\nabla (f^{-1})\dd v\dd x\dd t\quad\forall \phi\in C^\infty_c([0,T)\times\R^{2d}).
\end{align*}
Since $\overline\nabla(1,v,|v|^2)=0$, at least formally, we have the following mass, momentum and energy conservation laws
\begin{align*}
    \int_{\R^{2d}}(1,v,|v|^2)f_t\dd v\dd x=\int_{\R^{2d}}(1,v,|v|^2)f_0\dd v\dd x\quad \text{for all $t\in[0,T]$}.
\end{align*}
Recalling the definition of the entropy $\cH(f)$ in \eqref{eq: log entropy}, similarly to the three-wave case \eqref{H}, we have
\begin{align*}
    \frac{d}{dt}\cH(f)=\mathcal{D}(f),
\end{align*}
where the entropy dissipation is defined as
\begin{align*}
    \mathcal{D}(f)=\pi\int_{\R^{2d}\times S^{d-1}}|V|^2ff'f_* f'_*|\overline \nabla f^{-1}|^2\dd\sigma\dd v_*\dd v\ge0.
\end{align*}

We closely follow the three-wave case in Section \ref{sub-sec:3} to define $(\aaE,\aS,\aM,\aL)$ with appropriate adjustment for the definition $\overline\nabla$ in \eqref{nabla-4} 
\begin{equation}
\label{block-4}
\begin{aligned}
\aaE(f)=\int_{\R^{2d}} \frac{|v|^2}{2} f\dd v\dd x\quad&\text{and}\quad \aS(f)=\cH(f),\\
\aM(f)g=-\pi\overline{\nabla}\cdot\Big(|V|^2ff'f_* f'_*\overline{\nabla} g\Big)\quad&\text{and}\quad\aL(f)g=-\nabla\cdot(f \aJ\nabla g).
\end{aligned}
\end{equation}
Similar to Section \ref{sub-sec:3}, one can check $(\aaE,\aS,\aM,\aL)$ is indeed a building block for the four-wave kinetic equation \eqref{4WKE}.

As has been mentioned in the introduction, the WKE and Boltzmann equations have many similar properties. In the following remark, we compare them in view of their GENERIC structures.
\begin{remark}[Comparison with Boltzmann equation]\label{rmk:boltzmann}
The kinetic Boltzmann equation can be written as 
    \begin{equation}
\label{Boltzmann}
    \left\{
    \begin{aligned}
    &(\d_t+v\cdot\nabla_x)f=Q(f)\\
    &Q(f)=\int_{\R^d\times  S^{d-1}}|V|^2(f'f'_*-ff_* )\dd\sigma \dd v_*. 
    \end{aligned}\right.
\end{equation}
This equation can be seen as a GENERIC system of the building block  \eqref{block-4} with $\aS$ and $\aM$ replaced by
    \begin{equation*}
\label{block-boltzmann}
\aS(f)=-\int_{\R^{2d}}f\log f\dd v\dd x,\quad\aM(f)g=-\overline{\nabla}\cdot\Big(|V|^2\Lambda(f)\overline{\nabla} g\Big),
\end{equation*}
where the divergence operator is defined in \ref{nabla-4}, and $\Lambda(f)=\Lambda(ff_*,f' f'_*)$, and $\Lambda(a,b)=\frac{a-b}{\log a-\log b}$ denotes the logarithm mean of $a,\,b>0$. The entropy functional
\begin{align*}
    \cH_B(f)=\int f\log f
\end{align*}
is so-called the Boltzmann entropy.

Notice that $\dd\aS$ is given by $\dd\aS(f)=-(\log f+1)$. 
The Boltzmann equation \eqref{Boltzmann} can be written as 
\begin{equation*}
(\d_t+v\cdot\nabla_x)f=\frac{1}{4}\overline{\nabla}\cdot\big(|V^2|\Lambda(f)\overline{\nabla}\log f\big).
\end{equation*}
The building block was shown in \cite{Ott97} (see also \cite{grmela2018generic} for an alternative construction based on non-quadratic dissipation potentials), and has been further studied in \cite{Erb23,EH25}.

We now discuss the similarities and differences between the GENERIC structures of the Boltzmann equation and the wave kinetic equation \eqref{WKE}. 
\begin{itemize}
    \item Similarities: the first similarity is that they share the same reversible dynamics (including the energy functional and the Poison operator) are the same. This is because we deal with the dispersion relation $\omega(v)=|v|^2$. The second similarity, which is intriguing, is that the discrete gradient and divergence operators appearing in the definition of the dissipative parts are also the same.
    \item Differences: a noticeable difference between the dissipative parts of the WKE and the Boltzmann equation is the entropy functional, which is 
    \begin{equation}
    \label{entropy functionals}    
    \begin{aligned}
    & \aS_1(f)=\int \log f\text{ for the WKE equation and}\\
    & \aS_2(f)=-\int f\log f\quad\text{for the Boltzmann equation}.
    \end{aligned}
    \end{equation}
    In addition the divergence operators are also weighted differently, which reflects the difference in the collision operators, namely wave collisions in the WKE and particle collisions in the Boltzmann equation. We provide further discussion on the entropy functionals in the next remark.
\end{itemize}
\end{remark}
In the following remark, we compare the dissipative operator for the WKE and Boltzmann equation with that of the heat/diffusion equation.
\begin{remark}[Comparison with the heat equation]\label{rmk:heat}
It is well-known that the PDE 
\[
\partial_t f=\Delta f
\]
can be viewed as either the heat equation describing the heat conduction phenomena or the diffusion equation modelling the diffusion phenomena. The difference between the twos can be recognised via the corresponding gradient flow structure \cite{JKO98,peletier2014large}
\begin{align*}
    \d_t f=\aM_i(f) \dd\aS_i,\quad i=1,2 
\end{align*}
where $\aS_1$ and $\aS_2$ are respectively the entropy functional for the WKE and Boltzmann equation defined in \eqref{entropy functionals} and the dissipative operator are given by
\begin{align*}
\aM_1(f)g=-\nabla\cdot(f^2\nabla g)\quad\text{and}\quad  \aM_2(f)g=-\nabla\cdot(f \nabla g).
\end{align*}
We recall that the operator $\aM_1$ induces to Wasserstein metric. It is interesting that the WKE share similar dissipative structure, in particular the same entropy functional, as that of the heat equation \cite{peletier2014large}. This intriguing similarity may contain deeper connections between the two equations and demands further investigations.
\end{remark}
The next two remarks discuss the GENERIC structure for the four-wave equation with general dispersion relation and the parametrisation of the resonant manifold in the three-wave equation.
\begin{remark}[General dispersion relation]
Similar to the three-wave kinetic equation \eqref{3WKE}, one can also define $\overline\nabla$ in the non-parametrised setting, where we define
\begin{align*}
\overline\nabla\phi=\phi_1+\phi_3-\phi_2-\phi\in\R^{4d},  \end{align*}
and the wave kinetic equation can be written as
\begin{align*}
    (\d_t+v\cdot\nabla_x)f=-\pi\overline{\nabla}\cdot \big(\delta_0\hat\delta_0 |V|^2ff_1f_2f_3\overline\nabla(f^{-1})\big),
\end{align*}
where we use the notations $\delta_0=\delta_d(v+v_2-v_1-v_3)$ and $\hat\delta_0=\delta_1(\omega+\omega_2-\omega_1-\omega_3)$.  
The same GENERIC formulation holds with the operator $\aM$ replaced by $\aM(f)g=-\pi\overline{\nabla}\cdot\Big(\delta_0\hat\delta_0|V|^2ff_1f_2 f_3\overline{\nabla} g\Big)$.
\end{remark}

\begin{remark}[Parametrisation of three-wave kinetic equations]\label{rmk:3}
For the three-wave kinetic equation \eqref{3WKE}, we take  $\omega(v)=|v|^2$. The restrictions $v-v^+-v^-=0$ and $|v|^2=|v^+|^2+|v^-|^2$ imply that $v^+\cdot v^-=0$, and $v^+$ and $v^-$ can be parametrised as follows
\begin{align*}
v^+=\frac{v}{2}+\frac{|v|}{2}\sigma,\quad v^-=\frac{v}{2}-\frac{|v|}{2}\sigma,\quad \sigma\in S^{d-1}.   
\end{align*}

Following \eqref{**} and \eqref{*} analogously, we define $\overline\nabla_3$ and $\overline\nabla_3\cdot$ as follows
\begin{align*}
&\overline\nabla_3\phi=\phi_++\phi_--\phi\quad\text{and}\\
&\overline\nabla_3\cdot G(x,v)=\int_{S^{d-1}}G(x,v,v_+,v_-)-G(x,v_+,v,v_-)-G(x,v_-,v_+,v)\dd\sigma
\end{align*}
for $\phi=\phi(x,v)$ and $G=G(x,v,v_+,v_-)$.
However, we lack the following identity
\begin{align*}
(v_+)_+= v\text{ or }v_-\quad\text{and}\quad(v_-)_-= v\text{ or }v_+, 
\end{align*}
and hence, there's no integration by parts identity 
\begin{align*}
\int_{\R^{2d}}(\overline\nabla_3\cdot G)\phi\dd v\dd x\neq\int_{\R^{2d}\times S^{d-1}}G(\overline\nabla_3\phi)\dd\sigma\dd v\dd x.    
\end{align*}
In other word, different from four-wave case, the weak formulation of $Q_3(f)$ is not invariant under the transformation $(v,v_\pm,v_\mp)\not\mapsto (v_\mp,v,v_\pm)$, and we can not bring $\delta_d(v-v_1-v_2)\delta_1(\omega-\omega_1-\omega_2)$ inside of the definition of $\overline\nabla_3$ to write $Q_3(f)$ into a divergence-form.
\end{remark}

\section{Small-angle limit}
\label{sec:limit}
In this section, we derive a small-angle limit of the four-wave equation with dispersion relation $\omega(v)=|v|^2$
\begin{equation}
\label{eq-4}
(\d_t+v\cdot\nabla_x)f=-4\pi\int_{\R^d\times S^{d-1}}|V|^2ff_* f'f'_*\overline\nabla f^{-1}\dd\sigma \dd v_*.
\end{equation}
As mentioned earlier, this is inspired by the grazing limit from the Boltzmann equation to the Landau equation.

We recall that  $\overline{\nabla}$ is given by $\overline\nabla\phi=\phi'+\phi'_*-\phi_*-\phi$, and
\begin{align*}
v'=\frac{v+v_*}{2}+\frac{|v-v_*|}{2}\sigma,\quad v_*'= \frac{v+v_*}{2}-\frac{|v-v_*|}{2}\sigma.
\end{align*}
We define the deviation angle 
\begin{align*}
\theta=\arccos \frac{v-v_*}{|v-v_*|}\cdot \sigma.
\end{align*}
We assume that the kernel $|V|^2$  has the form
\begin{equation*}
\label{kernel-B}
|V|^2=B(|v-v_*|)^2b(\theta)\ge 0, \quad \theta\in[0,\pi/2].
\end{equation*}
Notice that $B(|v-v_*|)=B(|v'-v_*'|)$.
One can restrict $\theta$ on $[0,\pi/2]$ by symmetrising $V=V(|v-v_*|,\sigma)$ as $\big(V(|v-v_*|,\sigma)+V(|v-v_*|,-\sigma)\big)\mathbb{1}_{\theta\in [0,\pi/2]}$.
 We will show that if the wave interaction happens when $\theta\sim0$, the wave kinetic equation \eqref{eq-4} formally converges to 
\begin{equation}
\label{Landau-2} 
\left\{
\begin{aligned}
&(\d_t +v\cdot\nabla_x)f=Q_L(f)\\
&Q_L(f)=-4\pi\nabla_v\cdot\int_{\R^{d}}B_0^2(ff_*)^2\Pi_{(v-v_*)^\perp}\big(\nabla_v f^{-1}-(\nabla_{v}f^{-1})_*\big) \dd v_*,      
\end{aligned}
\right.
\end{equation}
where $B_0^2=B^2|v-v_*|^2$, and $\Pi_{v^\perp}$ denotes the projection onto $v^\perp$ and given by
\begin{align*}
\Pi_{v^\perp}=\operatorname{Id}-\frac{v\otimes v}{|v|^2}.
\end{align*}
In Section \ref{sec-3:scaling}, we present the small-angle scaling and compare it with the Boltzmann grazing limit. 
In Section \ref{sec-3:limit}, we formally derive small-angle limit obtaining the limiting equation \eqref{eq-4}. In Section \ref{sec-3:generic}, we show that this limiting equation \eqref{Landau-2} also has a GENERIC formulation similar to that of the Landau equation. Thus, we  establish a new analogy between the wave turbulence theory and classical kinetic theory.
\subsection{Small-angle scaling and Boltzmann grazing limit}
\label{sec-3:scaling}

We first present the small-angle scaling.
We define the vector
\begin{align*}
k=\frac{v-v_*}{|v-v_*|}\in S^{d-1}.
\end{align*}
We recall that the deviation angle $\theta=\arccos k\cdot\sigma$.
We define the angle function
\begin{align*}
\beta(\theta)=\sin\theta^{d-2}b(\theta),
\end{align*}
and it satisfies
\begin{align*}
    \supp(\beta)\subset [0,\pi/2],\quad \beta(\theta)\gtrsim \theta^{-2} \quad\text{and}\quad \int_{0}^{\frac{\pi}{2}}\theta^2\beta(\theta)\dd\theta=8(d-1)/|S^{d-2}|.
\end{align*}
For $d=2$, we take $|S^0|=2$.
For any $\varepsilon\in(0,1)$, we take a sequence of scaling
\begin{equation*}
\beta^\varepsilon(\theta)=\pi^3/\varepsilon^3\beta (\pi\theta/\varepsilon).
\end{equation*}
For any $\varepsilon\in(0,1)$, we have
\begin{align*}
   \int_{0}^{\frac{\varepsilon}{2}}\theta^2\beta^\varepsilon(\theta)\dd\theta=8(d-1)/|S^{d-2}|\quad\text{and}\quad \supp (\beta^\varepsilon)\subset[0,\varepsilon/2].
\end{align*}
Correspondingly, we define the scaling kernels
\begin{equation*}
\begin{aligned}
&b^\varepsilon(\theta)=(\sin\theta)^{-(d-2)}\beta^\varepsilon(\theta)\quad\text{and}\quad |V^\varepsilon|^2= B^2 b^\varepsilon(\theta)/2,\label{kernel:epsilon}.
\end{aligned}
\end{equation*}

For $d\ge3$, for a given $k\in S^{d-1}$, we define the sphere $S^{d-2}_{k^\perp}=\{p\in S^{d-1}\mid k\cdot p=0\}$. For $d=2$ and $k=(k_1,k_2)\in S^1$, we use the notation $S^{0}_{k^\perp}=\{(k_2,-k_1),\,(-k_2,k_1)\}$, and  $ \int_{S^{0}_{k^\perp}}f=f(k_2,-k_1)+f(-k_2,k_1)$. The scaling collision term $Q^\varepsilon(f)$ can be written as
\begin{equation}
    \label{qs-4}
\begin{aligned}
    Q^\varepsilon(f)&=-4\pi\int_{\R^{d}\times S^{d-1}}|V^\varepsilon|^2ff_*f'f_*'\overline\nabla f^{-1}\dd\sigma\dd v_*\\
    &=-4\pi\int_{\R^{d}}\int_0^{\varepsilon/2}\int_{S^{d-2}_{k^\perp}}B^2\beta^\varepsilon(\theta)ff_*f'f_*'\overline\nabla f^{-1}\dd p\dd \theta\dd v_*.
\end{aligned}
\end{equation}

In Section \ref{sec-3:limit}, we show that as $\varepsilon\to0$, the scaling four-wave kinetic equation \eqref{eq-4}-\eqref{qs-4} converges at least formally to \eqref{Landau-2}.

A similar small-angle (grazing) limit has been studied for (kinetic) Boltzmann equations in literature, see  \cite{Des92,Vil96,AV04,carrillo2022boltzmann}. More precisely, as 
 $\varepsilon\to0$ 
 the Boltzmann equation
\begin{align*}
    (\d_t+v\cdot \nabla_x)f=\int_{\R^d\times S^{d-1}}|V^\varepsilon|^2(f'f_*'-ff_*)\dd\sigma\dd v_*
\end{align*}
converges to the Landau equation
\begin{equation*}
 \label{Landau}  
 \begin{aligned}
    (\d_t +v\cdot\nabla_x)f=\nabla_v\cdot \int_{\R^d}B_0^2 ff_*\Pi_{(v-v_*)^\perp}\big(\nabla_v \log f-(\nabla_v\log f)_*\big)\dd v_*.
\end{aligned}
\end{equation*}
To the best of our knowledge, the limiting equation \eqref{Landau-2} has not been derived in the literature. However, it is worth mentioning that the small-angle limit, combining with the limit $v\sim v_*$ for the four-wave kinetic equation has been studied in \cite{HH81}.
\subsection{Small angle limit}
\label{sec-3:limit}

In this section, we follow the Boltzmann grazing limit as in \cite{Vil96} to show that
\begin{align*}
    Q^\varepsilon(f)\to Q_L(f),
\end{align*}
where the collision terms $Q^\varepsilon$ and $Q_L$ are given in \eqref{Landau-2} and \eqref{qs-4}
\begin{align*}
   Q^\varepsilon(f)&=- 4\pi\int_{\R^{2d}}\int_0^{\varepsilon/2}B^2\beta^\varepsilon(\theta)ff_*\Big(\int_{S^{d-2}_{k^\perp}}f'f_*'\overline\nabla f^{-1}\dd p\Big)\dd \theta\dd v_*,\\
   Q_L(f)&= -4\pi\nabla_v\cdot\int_{\R^{d}}B_0^2(ff_*)^2\Pi_{(v-v_*)^\perp}\big(\nabla_v f^{-1}-(\nabla_{v}f^{-1})_*\big) \dd v_*.
\end{align*}

For $\theta\sim\varepsilon$ small, we show the following lemma.
\begin{lemma}
\label{lem:limit}
For any $f,\phi\in \cS(\R^{2d})$, we have 
\begin{align*}
\int_{S^{d-2}_{k^\perp}}f'f_*'\overline\nabla \phi \dd p&={}\frac{\theta^2|S^{d-2}|}{8(d-1)}\Big(2|v-v_*|^2(\nabla_v-\nabla_{v_*})ff_*\cdot \Pi_{(v-v_*)^\perp}\big(\nabla_v\phi-(\nabla_v\phi)_*\big)\\
&\quad+ff_*(\nabla_v-\nabla_{v_*})\cdot\big(|v-v_*|^2\Pi_{(v-v_*)^\perp}(\nabla_v \phi-(\nabla_v \phi)_*)\big)\Big)+o(\varepsilon^2).
\end{align*}
\end{lemma}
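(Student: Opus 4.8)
The plan is to parametrise the collision geometry near the forward direction and Taylor expand the integrand to second order in $\theta$, then carry out the angular average over $S^{d-2}_{k^\perp}$. Since $\theta$ is small I would write $\sigma=\cos\theta\,k+\sin\theta\,p$ with $p\in S^{d-2}_{k^\perp}$, so that the displacement $h:=v'-v=\tfrac{|v-v_*|}{2}(\sigma-k)=\tfrac{|v-v_*|}{2}\big((\cos\theta-1)k+\sin\theta\,p\big)$ satisfies $v_*'-v_*=-h$, and $h=\tfrac{|v-v_*|}{2}\big(\theta\,p-\tfrac{\theta^2}{2}k\big)+O(\theta^3)$. The essential structural point, which I would flag from the outset, is that $h$ carries a first-order part along $p$ together with a genuinely second-order correction along $k$; both contribute at order $\theta^2$, and dropping the latter gives the wrong limit.

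Next I would Taylor expand $\phi'=\phi+\nabla_v\phi\cdot h+\tfrac12\,h^\top D^2_v\phi\,h+O(|h|^3)$ and the analogue for $\phi_*'$ (replace $h$ by $-h$, evaluate derivatives at $v_*$), and expand $f',f_*'$ to first order. Forming $\overline\nabla\phi=\phi'+\phi_*'-\phi_*-\phi$ and $f'f_*'$ and collecting powers of $\theta$, one sees that $\overline\nabla\phi$ has no $O(1)$ term (at $\theta=0$ one has $v'=v$, $v_*'=v_*$), its $O(\theta)$ term $(\nabla_v\phi-(\nabla_v\phi)_*)\cdot\tfrac{|v-v_*|}{2}p$ is odd in $p$, and its $O(\theta^2)$ term is the sum of the Hessian piece $\tfrac{|v-v_*|^2}{8}\,p^\top(D^2_v\phi+(D^2_v\phi)_*)p$ and the gradient-times-second-order-displacement piece $-\tfrac{|v-v_*|}{4}(\nabla_v\phi-(\nabla_v\phi)_*)\cdot k$. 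Consequently $f'f_*'\,\overline\nabla\phi$ has its first even-in-$p$ contribution at order $\theta^2$, arising from exactly two sources: $ff_*$ times the $O(\theta^2)$ part of $\overline\nabla\phi$, and the $O(\theta)$ part of $f'f_*'$ times the $O(\theta)$ part of $\overline\nabla\phi$.

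I would then average over $p$ using $\int_{S^{d-2}_{k^\perp}}p\,\dd p=0$ and $\int_{S^{d-2}_{k^\perp}}p\otimes p\,\dd p=\tfrac{|S^{d-2}|}{d-1}\Pi_{(v-v_*)^\perp}$, which kill all odd terms and convert the quadratic ones into projections onto $(v-v_*)^\perp$. The second source gives at once the first term of the claimed identity: writing $c:=\nabla_v\phi-(\nabla_v\phi)_*$ and using $(\nabla_v-\nabla_{v_*})(ff_*)=f_*\nabla_vf-f(\nabla_vf)_*$, it becomes $\tfrac{\theta^2|S^{d-2}|}{8(d-1)}\cdot 2|v-v_*|^2(\nabla_v-\nabla_{v_*})(ff_*)\cdot\Pi_{(v-v_*)^\perp}c$. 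The first source contributes $ff_*|S^{d-2}|\big(-\tfrac{|v-v_*|}{4}\,c\cdot k+\tfrac{|v-v_*|^2}{8(d-1)}\operatorname{tr}((D^2_v\phi+(D^2_v\phi)_*)\Pi_{(v-v_*)^\perp})\big)\theta^2$.

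The main obstacle is to recast this last $ff_*$-contribution into the compact divergence form of the second claimed term; the telltale $c\cdot k$ piece is precisely the footprint of the second-order correction in $h$ and must combine with the Hessian trace. The key algebraic identity I need, for the vector field $F:=|v-v_*|^2\Pi_{(v-v_*)^\perp}c=|v-v_*|^2c-((v-v_*)\cdot c)(v-v_*)$, is
\[
(\nabla_v-\nabla_{v_*})\cdot F=-2(d-1)|v-v_*|\,(c\cdot k)+|v-v_*|^2\operatorname{tr}\big((D^2_v\phi+(D^2_v\phi)_*)\Pi_{(v-v_*)^\perp}\big).
\]
I would verify it by a direct component computation, using $\partial_{v_i}c_j=(D^2_v\phi)_{ij}$, $\partial_{(v_*)_i}c_j=-\big((D^2_v\phi)_*\big)_{ij}$, $\partial_{v_i}(v-v_*)_j=\delta_{ij}=-\partial_{(v_*)_i}(v-v_*)_j$, and the contraction identity $\operatorname{tr}(M\Pi_{(v-v_*)^\perp})=\operatorname{tr}M-\tfrac{1}{|v-v_*|^2}(v-v_*)^\top M(v-v_*)$; the $-2(d-1)$ coefficient stems from the divergences $\nabla_v\cdot(v-v_*)=d=-\nabla_{v_*}\cdot(v-v_*)$ together with the projection. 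Substituting this identity turns the $ff_*$-contribution into $\tfrac{\theta^2|S^{d-2}|}{8(d-1)}ff_*(\nabla_v-\nabla_{v_*})\cdot\big(|v-v_*|^2\Pi_{(v-v_*)^\perp}c\big)$, matching the second term, while all omitted terms are $O(\theta^3)$ and hence $o(\varepsilon^2)$ uniformly on $\supp\beta^\varepsilon\subset[0,\varepsilon/2]$.
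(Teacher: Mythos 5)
Your proposal is correct and follows essentially the same route as the paper: the same parametrisation $\sigma=\cos\theta\,k+\sin\theta\,p$, second-order Taylor expansion of $\overline\nabla\phi$ with first-order expansion of $f'f_*'$, the angular identities $\int_{S^{d-2}_{k^\perp}}p\,\dd p=0$ and $\int_{S^{d-2}_{k^\perp}}p\otimes p\,\dd p=\tfrac{|S^{d-2}|}{d-1}\Pi_{k^\perp}$, and the same split into the $ff_*$-weighted part and the cross term, the only cosmetic difference being that you verify the divergence-form identity for $(\nabla_v-\nabla_{v_*})\cdot\big(|v-v_*|^2\Pi_{(v-v_*)^\perp}(\nabla_v\phi-(\nabla_v\phi)_*)\big)$ by a direct component computation, whereas the paper passes to $v_2=(v-v_*)/2$ and invokes $\nabla_{v}\cdot(|v|^2\Pi_{v^\perp})=-(d-1)v$. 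Your constants are also handled more carefully than the paper's write-up: you correctly track the second-order displacement $-\tfrac{\theta^2}{2}k$, whereas the paper's intermediate line $\cos\theta-1=-\theta^2+o(\varepsilon^2)$ is a typo for $-\theta^2/2$ (harmless, since the stated lemma agrees with your corrected bookkeeping).
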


Notice that $\Pi_{(v-v_*)^{\perp}}(\nabla_v-\nabla_{v_*})|v-v_*|=0$. Hence, for any $A=A(|v-v_*|)$, we have 
\begin{align*}
[\Pi_{(v-v_*)^\perp},A]=0.    
\end{align*}
Then, by Lemma \ref{lem:limit}, we have 
\begin{align*}
    &B^2ff_*\int_{S^{d-2}_{k^\perp}}f'f_*'\overline\nabla \phi\dd p\\
=&{}\frac{\theta^2|S^{d-2}|}{8(d-1)}(\nabla_v-\nabla_{v_*})\cdot\big(B_0^2(ff_*)^2 \Pi_{(v-v_*)^\perp} (\nabla_v\phi-(\nabla_v\phi)_*)\big)+o(\varepsilon^2),
\end{align*}
where  the kernel $B_0^2=B^2|v-v_*|^2$. We substitute $\phi=f^{-1}$ to derive 
\begin{align*}
    B^2ff_*\int_{S^{d-2}_{k^\perp}}f'f_*'\overline\nabla f^{-1}\dd p=\frac{\theta^2|S^{d-2}|}{8(d-1)}(\nabla_v-\nabla_{v_*})\cdot\big(B_0^2(ff_*)^2 \widetilde\nabla f^{-1}\big)+o(\varepsilon^2).
\end{align*}
By assumption $\int_0^{\varepsilon/2}\theta^2\beta^\varepsilon(\theta)\dd \theta={8(d-1)}/{|S^{d-2}|}$, we have
\begin{align*}
   Q^\varepsilon(f)&=- 4\pi\int_{\R^{d}}(\nabla_v-\nabla_{v_*})\cdot\big(B_0^2(ff_*)^2 \widetilde\nabla f^{-1}\big)\dd v_*+o(\varepsilon^2)\\
   &\to- 4\pi\nabla_v\cdot\int_{\R^{d}}B_0^2(ff_*)^2 \Pi_{(v-v_*)^\perp} (\nabla_v\phi-(\nabla_v\phi)_*)\dd v_*=Q_L(f).
\end{align*}

We conclude that the small-angle limit of the four-wave kinetic equation \eqref{WKE} is indeed \eqref{Landau-2}.

\medskip

It remains to show Lemma \ref{lem:limit}.
\begin{proof}[Proof of Lemma \ref{lem:limit}]
 The proof follows \cite{Vil96,carrillo2022boltzmann} with appropriate modification for the wave collision operators.

We recall that $k=\frac{v-v_*}{|v-v_*|}$. We define
\begin{align*}
    v_1=\frac{v+v_*}{2}\quad\text{and}\quad v_2=\frac{v-v_*}{2}.
\end{align*}

We use Taylor expansion to approximate $f'f_*'$ and $\overline\nabla\phi$.
\begin{itemize}
    \item We first deal with the term ($f'f_*'$). Notice that $v_1=v-v_2$ and $v_1=v_*+v_2$. Then $v'$ and $v_*'$ can be written as
    \begin{align*}   
    v'=v+|v_2|(\sigma-k)\quad\text{and}\quad v_*'=v_*-|v_2|(\sigma-k).
    \end{align*}
   By definition, we have $\sigma-k=k(\cos\theta-1)+p\sin\theta$. Then for  $\theta\sim\varepsilon$ small, we have $(\cos\theta-1)=-\theta^2+o(\varepsilon^2)$ and $\sin\theta=\theta+o(\varepsilon)$. 
    By Taylor expansion, we have 
    \begin{align*}
   & f'=f+|v_2|(\sigma-k)\cdot\nabla_vf+O(\varepsilon^2),\\
   & f_*'=f_*-|v_2|(\sigma-k)\cdot (\nabla_v f)_*+O(\varepsilon^2),
\end{align*}
and hence,
\begin{equation}
\label{T-1}
    f'f_*'=ff_*+|v_2|(\sigma-k)\cdot(\nabla_v -\nabla_{v_*})ff_*+O(\varepsilon^2).
\end{equation}

    \item We now consider the term ($\overline\nabla\phi$). We write 
\begin{align*}
&v=v_1+|v_2|k,\quad v_*=v_1-|v_2|k,\quad  v'=v_1+|v_2|\sigma,\quad v_*'=v_1-|v_2|\sigma.
\end{align*}
By Taylor expansion, we have
\begin{align*}
    &\overline\nabla \phi=\phi'-\phi+\phi_*'-\phi_*\\
    =&{}\int_0^1\frac{d}{dt}\phi\big(x,v_1+|v_2|(t\sigma+(1-t)k\big)\dd t\\
     &+\int_0^1\frac{d}{dt}\phi\big(x,v_1-|v_2|(t\sigma+(1-t)k\big)\dd t\\
     =&{}|v_2|(\sigma-k)\int_0^1\Big(\nabla_v\phi\big(x,v_1+|v_2|(t\sigma+(1-t)k\big)\\
     &-\nabla_v\phi\big(x,v_1-|v_2|(t\sigma+(1-t)k\big)\Big)\dd t.
\end{align*}
Similarly, we have
\begin{align*}
&\nabla_v\phi\big(x,v_1\pm|v_2|(t\sigma+(1-t)k)\big)\\
=&{}\nabla_v\phi\big(x,v_1\pm|v_2|k\big)\pm t|v_2|(\sigma-k)\times\\
&\times\int_0^1D^2_v\phi\big(x,v_1\pm |v_2|s(t\sigma+(1-t)k)+(1-s)k\big)\dd s.
\end{align*}
Hence, $\overline\nabla\phi$ can be written as
\begin{align}
\label{T-2}
    \overline\nabla \phi&=|v_2|(\sigma-k)\big(\nabla_v\phi-(\nabla_v\phi)_*\big)+|v_2|^2(\sigma-k)\otimes (\sigma-k):T,
\end{align}
where
\begin{align*}
T&=\int_0^1t\int_0^1\Big(D^2_v\phi\big(x,v_1+|v_2|s(t\sigma+(1-t)k)+(1-s)k\big)\\
&+D^2_v\phi\big(x,v_1-|v_2|s(t\sigma+(1-t)k)+(1-s)k\big)\Big)\dd s\dd t.
\end{align*}
\end{itemize}

Combining \eqref{T-1} and \eqref{T-2}, we have 
\begin{align*}
f'f_*'\overline\nabla\phi&
=I_1+I_2+I_3+o(\varepsilon^2),
\end{align*}
where
\begin{align*}
I_1&=ff_*|v_2|(\sigma-k)\cdot\big(\nabla_v\phi-(\nabla_v\phi)_*\big),\\
I_2&=ff_*|v_2|^2(\sigma-k)\otimes (\sigma-k):T,\\
I_3&=|v_2|^2(\sigma-k)\otimes(\sigma-k):\big(\nabla_v\phi-(\nabla_v\phi)_*\big)\otimes (\nabla_v -\nabla_{v_*})ff_*.
\end{align*}
We recall that that \begin{equation}
\label{sigma-k}
\sigma-k=k(\cos\theta-1)+p\sin\theta,    
\end{equation}
where $(\cos\theta-1)=-\theta^2+o(\varepsilon^2)$ and $\sin\theta=\theta+o(\varepsilon)$. Moreover, we have
\begin{align*}
\int_{S^{d-2}_{k^\perp}} (\sigma-k)\otimes(\sigma-k)\dd p=\theta^2\int_{S^{d-2}_{k^\perp}} p\otimes p\dd p+o(\varepsilon^2).
\end{align*}
Notice that
\begin{align*}
&k\cdot\int_{S^{d-2}_{k^\perp}}p\otimes p\dd p=\int_{S^{d-2}_{k^\perp}}(p\cdot k)p\dd p=0. \end{align*}
Let $\{e_i\}_{i=1}^{d-1}$ be orthogonal basis of $\R^{d-1}$. For any $i=1,\dots,d-1$, we have 
\begin{align*}
&e_i\cdot\int_{S^{d-2}_{k^\perp}}p\otimes p\dd p=\Big(\int_{S^{d-2}_{k^\perp}}p_i^2\dd p\Big)e_i
\end{align*}
and $\int_{S^{d-2}_{k^\perp}}p_i^2\dd p={|S^{d-2}|}/{d-1}$. Hence, we have
\begin{equation}
\label{perp}
\int_{S^{d-2}_{k^\perp}}p\otimes p\dd p=\frac{|S^{d-2}|}{d-1}\Pi_{k^\perp}.    
\end{equation}

In the following, we compute  $\int_{S^{d-2}_{k^\perp}} I_i\dd p$, $i=1,2,3$.
\begin{enumerate}[(1)]
    \item By using \eqref{sigma-k}, we have
\begin{equation*}
\begin{aligned}
    &\int_{S^{d-2}_{k^\perp}}|v_2|(\sigma-k)\big(\nabla_v\phi-(\nabla_v\phi)_*\big)\dd p\\
    =&{}|v_2|\int_{S^{d-2}_{k^\perp}}\big((\cos\theta-1)N_k+\sin\theta N_p\big)\dd p,
\end{aligned}
\end{equation*}
where $N_k$ and $N_p$ denote the projections 
\begin{align*}
N_k=(\nabla_v\phi-(\nabla_v\phi)_*)\cdot k\quad\text{and}\quad  N_p=(\nabla_v\phi-(\nabla_v\phi)_*)\cdot p. \end{align*}
Notice that $N_k$ is independent of $p$, and $\int_{S^{d-2}_{k^\perp}} N_p\dd p=0$.
Notice that $N_k$ is independent of $p$, and $\int_{S^{d-2}_{k^\perp}} N_p\dd p=0$.

Hence, we have
\begin{equation}
\label{int-1}
    \int_{S^{d-2}_{k^\perp}}I_1\dd p=-ff_*\theta^2|v_2||S^{d-2}N_k+o(\varepsilon^2).
\end{equation}

    \item 
    As $\theta\to0$, we have $\sigma\to k$ and 
\begin{align*}
    T\to\frac{D^2_v\phi+(D^2_v\phi)_*}{2}.
\end{align*}
By using \eqref{perp}, we have 
\begin{equation}
\label{int-2}
\begin{aligned}
\int_{S^{d-2}_{k^\perp}}I_2\dd p
    &=ff_*\frac{ \theta^2|v_2|^2}{2} \int_{S^{d-2}_{k^\perp}}p\otimes p\dd p:\big(D^2_v\phi+(D^2_v\phi)_*\big)+o(\varepsilon^2)\\
    &=\frac{ ff_*\theta^2|v_2|^2|S^{d-2}|}{2(d-1)} \Pi_{k^\perp}:\big(D^2_v\phi+(D^2_v\phi)_*\big)+o(\varepsilon^2).
\end{aligned}
\end{equation}

    \item By using \eqref{perp}, we have
    \begin{equation}
    \label{int-3}
    \begin{aligned}
        &\int_{S^{d-2}_{k^\perp}}I_3\dd p\\
        =&{}|v_2|^2\theta^2\int_{S^{d-2}_{k^\perp}}p\otimes p\dd p:\big(\nabla_v\phi-(\nabla_v\phi)_*\big)\otimes (\nabla_v -\nabla_{v_*})ff_*\\
        =&{}|v_2|^2\theta^2\frac{|S^{d-2}|}{d-1}\Pi_{k^\perp}:\big(\nabla_v\phi-(\nabla_v\phi)_*\big)\otimes (\nabla_v -\nabla_{v_*})ff_*\\
        =&{}|v_2|^2\theta^2\frac{|S^{d-2}|}{d-1}(\nabla_v -\nabla_{v_*})ff_*\cdot \Pi_{k^\perp}\big(\nabla_v\phi-(\nabla_v\phi)_*\big).
    \end{aligned}
    \end{equation}
\end{enumerate}

Combining \eqref{int-1} and \eqref{int-2}, we use $|v_2|k=v_2$ to derive
\begin{align*}
&\int_{S^{d-2}_{k^\perp}}I_1+I_2\dd p\\
=&{}ff_*\theta^2|S^{d-2}|\Big(-v_2\cdot \big(\nabla_v \phi-(\nabla_v \phi)_*\big)+\frac{ |v_2|^2}{d-1} \Pi_{v_2^\perp}:\frac{D^2_v\phi+(D^2_v\phi)_*}{2}\Big)+o(\varepsilon^2).
\end{align*}

By using of the identity $\nabla_v\cdot (|v|^2\Pi_{v^\perp})=-(d-1)v$, we have 
 \begin{equation}
    \label{int-4}
\begin{aligned}
&\int_{S^{d-2}_{k^\perp}}I_1+I_2\dd p=ff_*\theta^2\frac{|S^{d-2}|}{d-1}\Big(\big(\nabla_{v_2}\cdot(|v_2|^2\Pi_{v_2^\perp})\big)\cdot (\nabla_v \phi-(\nabla_v \phi)_*)\\
&\quad+|v_2|^2 \Pi_{v_2^\perp}:\frac{D^2_v\phi+(D^2_v\phi)_*}{2}\Big)+o(\varepsilon^2)\\
&=ff_*\theta^2\frac{|S^{d-2}|}{d-1}\nabla_{v_2}\cdot\Big(|v_2|^2\Pi_{v_2^\perp})(\nabla_v \phi-(\nabla_v \phi)_*)\Big)+o(\varepsilon^2)\\
&=ff_*\theta^2\frac{|S^{d-2}|}{8(d-1)}(\nabla_v-\nabla_{v_*})\cdot\Big(|v-v_*|^2\Pi_{(v-v_*)^\perp})(\nabla_v \phi-(\nabla_v \phi)_*)\Big)+o(\varepsilon^2).
\end{aligned}
\end{equation}
We combine the $I_3$ term 
\eqref{int-3} and \eqref{int-4} to derive 
\begin{align*}
   &\int_{S^{d-2}_{k^\perp}}f'f_*' \overline\nabla \phi\dd p\\
   =&{}\theta^2\frac{|S^{d-2}|}{8(d-1)}\Big(2|v-v_*|^2(\nabla_v -\nabla_{v_*})ff_*\cdot \Pi_{k^\perp}\big(\nabla_v\phi-(\nabla_v\phi)_*\big)\\
   &+ff_*(\nabla_v-\nabla_{v_*})\cdot\big(|v-v_*|^2\Pi_{(v-v_*)^\perp}(\nabla_v \phi-(\nabla_v \phi)_*)\big)\Big)+o(\varepsilon^2).
\end{align*}
This completes the proof of Lemma \ref{lem:limit}.
\end{proof}
In the following remark, we comment on the small-angle limit for the three-wave kinetic equation.
\begin{remark}[Small-angle limit for three-wave kinetic equation]
We do the parametrisation as in Remark \ref{rmk:3}. We define $v_0=v/2$, then we have
\begin{align*}
v=v_0+|v_0|k,\quad v_1=v_0+|v_0|\sigma,\quad v_2=v_0-|v_0|\sigma,\quad \sigma\in S^{d-1}.   
\end{align*}
We define $k=\frac{v}{|v|}\in S^{d-1}$ and the angle $\theta=\arccos \frac{v\cdot\sigma}{|v|}$. By Taylor expansion, we have
\begin{align*}
    &\overline\nabla_3 \phi=\phi_1-\phi+\phi_2\\
    =&{}\phi_2+\int_0^1\frac{d}{dt}\phi\big(x,v_0+|v_0|((1-t)k+t\sigma)\big)\dd t\\
     =&{}\phi_2+|v_0|(\sigma-k)\cdot\int_0^1\nabla_v\phi\big(x,v_0+|v_0|((1-t)k+t\sigma)\big)\dd t.
\end{align*}
As $\theta\to0$, we have $\phi_2\to \phi(x,0)$ and $|\sigma-k|=O(\theta)$. Then we have
\begin{align*}
    \overline\nabla_3 \phi=\phi(x,0)+O(\theta|v_0|\|\nabla_v\|_{L^\infty}),
\end{align*}
where the leading term is $\phi(x,0)$.
\end{remark}

\subsection{GENERIC formulation for small-angle limiting equation}
\label{sec-3:generic}
We now show that the limiting equation \eqref{Landau-2} can also be written as a GENERIC system. For any $\phi=\phi(x,v)$, we define
$\widetilde\nabla$ for any $\phi=\phi(x,v)$ by
\begin{align*}
\widetilde\nabla\phi=\Pi_{(v-v_*)^\perp}\big(\nabla_v\phi-(\nabla_{v}\phi)_*\big).
\end{align*}
Let $G=G(x,v,v_*):\R^{3d}\to \R^d$. We have the following integration by parts formula 
\begin{align*}
    \int_{\R^{3d}}G\cdot \widetilde \nabla \phi\dd v_*\dd v\dd x=-\int_{\R^{2d}}\widetilde\nabla\cdot G \phi\dd v\dd x,
\end{align*}
and $\widetilde\nabla\cdot G$ is given by 
\begin{align}
\label{tilde-2}
\widetilde\nabla\cdot G(x,v)=\nabla_v\cdot\int_{\R^d}\Pi_{(v-v_*)^\perp}\big(G(x,v,v_*)-G(x,v_*,v)\big)\dd v_*.  \end{align}
Then $Q_L(f)$ can be written as 
\begin{align*}
Q_L(f)=-2\pi\widetilde\nabla\cdot \big(B_0^2(ff_*)^2\widetilde\nabla f^{-1}\big).
\end{align*}

We have the following weak formulation for the limit equation \eqref{Landau-2}
\begin{align*}
   &\int_{\R^{2d}}\phi_0f_0\dd v\dd x- \int_0^T\int_{\R^{2d}}(\d_t\phi +v\cdot \nabla_x\phi)f\dd v\dd x\dd t\\
&=2\pi\int_0^T\int_{\R^{2d}}B_0^2(ff_*)^2\widetilde\nabla\phi\cdot \widetilde\nabla f^{-1}\dd v\dd x\dd t\quad\forall \phi\in C^\infty_c([0,T)\times\R^{2d}).
\end{align*}
Since $\widetilde\nabla(1,v,|v|^2)=0$, at least formally, we have mass, momentum and energy conservation laws
\begin{align*}
    \int_{\R^{2d}}(1,v,|v|^2)f_t\dd v\dd x=\int_{\R^{2d}}(1,v,|v|^2)f_0\dd v\dd x\quad \text{for all $t\in[0,T]$}.
\end{align*}
We recall the entropy \eqref{H}
\begin{align*}
\cH(f)=\int_{\R^{2d}}\log f\dd v\dd x.
\end{align*}
We have the following $\cH$-theorem
\begin{align*}
    \frac{d}{dt}\cH(f)=\mathcal{D}_L(f),
\end{align*}
where the entropy dissipation is given by
\begin{align*}
    \mathcal{D}_L(f)=2\pi\int_{\R^{4d}}B_0^2(ff_*)^2|\widetilde \nabla f^{-1}|^2\dd v_*\dd v\dd x\ge0.
\end{align*}
The limiting equation \eqref{Landau-2} can be cast as a GENERIC system with the following building block
\begin{equation*}
    \label{block:limit}
\begin{aligned}
    \aaE(f)=\int_{\R^{2d}} \frac{|v|^2}{2} f\dd v\dd x\quad&\text{and}\quad \aS(f)=\cH(f),\\
\aM(f)g=-2\pi\widetilde{\nabla}\cdot\Big(B_0^2(ff_*)^2\widetilde{\nabla} g\Big)\quad&\text{and}\quad\aL(f)g=-\nabla\cdot(f \aJ\nabla g),
\end{aligned}
\end{equation*}
where $\nabla=(\nabla_x,\nabla_v)^T$.

Follow to Section \ref{sub-sec:3} and \ref{sub-sec:4}, 
we can check that $\{\aL, \aM, \aaE, \aS\}$ is indeed a GENERIC building block for the limit equation \eqref{Landau-2}. 

We now discuss the similarity and difference between the GENERIC structure of the WKE and the limiting systems with that of the Boltzmann and Landau equations. Re recall that in Remark \ref{rmk:boltzmann}, we demonstrate that the Boltzmann equation can be cast as a GENERIC system with a building block 
\begin{align*}
 \aaE(f)=\int_{\R^{2d}} \frac{|v|^2}{2} f\dd v\dd x\quad&\text{and}\quad \aS(f)=-\cH_B(f),\\
\aM(f)g=\frac14\overline{\nabla}\cdot\Big(|V|^2\Lambda(f)\overline{\nabla} g\Big)\quad&\text{and}\quad\aL(f)g=-\nabla\cdot(f \aJ\nabla g),
\end{align*}
where $\cH_B(f)=\int f\log f$ denotes the Boltzmann entropy.
The limit Landau equation also has a GENERIC structure of the same building block with  $\aM$ replaced by
\begin{equation*}
\label{block-landau}
\aM(f)g=\frac12\widetilde{\nabla}\cdot\Big(B_0^2ff_*\widetilde{\nabla} g\Big),
\end{equation*}
where $\widetilde\nabla$ is defined as in \eqref{tilde-2}.
A variational characterisation for the GENERIC structure of spatial-homogeneous and delocalised Landau has been studied in \cite{carrillo2024landau,DuongHe2025}.
In particular, \textcite{carrillo2022boltzmann} showed the grazing limit of spatial-homogeneous Boltzmann equations by using of the GENERIC (gradient flow) of the homogeneous Boltzmann and Landau equations.

Notice that the Boltzmann equation and the limit Landau equation
\begin{equation}
\label{G-1}
\begin{aligned}
&(\d_t+v\cdot \nabla_x)f=\frac14\overline\nabla\cdot\big(|V|^2\Lambda(f)\overline\nabla\log f\big)\\
\text{and}\quad&(\d_t+v\cdot \nabla_x)f=\frac12\widetilde\nabla\cdot\big(B_0^2 ff_*\widetilde\nabla\log f\big)
\end{aligned}
\end{equation}
are both GENERIC systems built from the same building block, differing only in the operator $\aM$, which are related to different gradients $\overline\nabla$ and $\widetilde\nabla$. This is the same for the four-wave kinetic equation and the limit equation \eqref{Landau-2}
\begin{equation}
    \label{G-2}
\begin{aligned}
&(\d_t+v\cdot \nabla_x)f=-\pi\overline\nabla\cdot\big(|V|^2ff_*f'f_*'\overline\nabla  f^{-1}\big)\\
\text{and}\quad&(\d_t+v\cdot \nabla_x)f=-2\pi\widetilde\nabla\cdot\big(B_0^2 (ff_*)^2\widetilde\nabla f^{-1}\big).
\end{aligned}
\end{equation}
The systems \eqref{G-1} and \eqref{G-2} are induced by Boltzmann and heat conduction entropy
\begin{align*}
    \cH_B(f)=-\int f\log f\quad\text{and}\quad\cH(f)=\int \log f, 
\end{align*}
and the linear and quadratic mobilities $ff_*$ and $(ff_*)^2$. This is a coincidence with the case of the heat equation in Remark \ref{rmk:heat}. 
\section{Conclusion and Outlook}
\label{sec: conclusion}
We have formulated the  three-wave and four-wave kinetic equations into the GENERIC framework and formally derived the small-angle limit for the four-wave equation. In addition, we have also shown the GENERIC structure of the limiting system. These establish new analogy to the Boltzmann and Landau equations in the classical kinetic theory.

By casting the WKE into the GENERIC framework, our analysis sheds some new light into these equations.
\begin{itemize}
\item \textit{(Thermodynamic consistency)} Thermodynamic consistency built-in
GENERIC guarantees the conservation of the energy and the entropy production structure. So the $H$-theorem of the WKE is encoded structurally, not just as an isolated property of the collision integral.
\item \textit{(Invariant and constrains)} The degeneracy conditions in GENERIC help to identify invariants and constrain dynamics naturally:
\begin{itemize}
    \item The degeneracy $\aM\dd\aaE=0$ corresponds to energy conservation by collisions.
    \item The degeneracy $\aL\dd\aS=0$ corresponds to entropy invariance by reversible flow.
\end{itemize}
This offers a systematic and compact/structural way to identify invariants (wave action, energy, momentum) without re-deriving them from scratch or guessing.
\item \textit{(Maximum entropy principle)} As shown in \cite{mielke2011formulation}, due to the degeneracy conditions, equilibria of a GENERIC equation can be obtained by the maximum entropy principle, that is they are maximizers of the entropy functional under the constrain of energy conservation. Applying this to the WKE, its stationary solutions solve (assuming isotropy)
\[
0=\dd(\aS(f)-\beta \aaE (f))-\mu=f^{-1}-\beta \omega (v)-\mu,
\]
which yields Rayleigh-Jeans equilibria:
\[
f(v)=\frac{1}{\mu+\beta \omega(v)},
\]
where the parameter $\mu$ to take into account the conservation of mass ($\mu=0$ for the 3-wave equation) and $\beta$ denotes the temperature of the system.  Note that the maximum entropy principle and stationary solutions have been discussed in the literature, see e.g. \cite{zakharov2012kolmogorov,saint2022wave}; however, the GENERIC structure provides a foundation for this statement, see \cite{mielke2011formulation}.
    \item \textit{Variational formulation}. The paper \cite{DPZ13} introduces a general variational formulation for the GENERIC system \eqref{eq:generic}
    that is: a curve $\az$ is a (variational) solution to \eqref{eq:generic} if and only if
    \begin{equation}
     \label{variatioanal formulation}   
    \aS(\az_0)-\aS(\az_T) +\frac12 \int_0^T \|\partial_t \mathsf z -\aL\dd\aaE\|_{\aM^{-1}}^2\dd t + +\frac12 \int_0^T\|\dd\aS\|_{\aM}^2 \dd t\leq 0,
    \end{equation}
    where $\|\cdot\|_{\aM^{-1}}$ denotes the (pseudo) norm induced by $\aM$:
    \[
    \|y\|_{\aM^{-1}}^2:=\sup_{x}\Big\{\langle x,y\rangle-\frac{1}{2}\|x\|_{\aM}^2\Big\}.
    \]
This variational formulation is an extension of the celebrated De Giorgi’s energy-dissipation principle for gradient flows \cite{ambrosio2005gradient}. It has been employed to proving well-posedness, constructing structure-preserving numerical methods and analysing multiscale behaviour for a number of systems including Vlasov-Fokker-Planck equation, finite-dimensional damped Hamiltonian systems, the fuzzy Boltzmann and Landau equations \cite{jungel2021minimizing,EH25,DuongHe2025}. By applying \eqref{variatioanal formulation} and using the building block constructed in Sections \ref{sub-sec:3}-\ref{sub-sec:4}, one can obtain a variational formulation for the 3-wave and 4-wave kinetic equations.
    \end{itemize}
Our analysis also opens up a number of interesting questions:
\begin{itemize}
    \item Can one derive the GENERIC structure of the WKE from the microscopic nonlinear wave equations?
    \item Can one establish rigorously the variational formulation \eqref{variatioanal formulation} for the WKE and connect it to the rate functional from the large-deviation principle of the WKE obtained in \cite{guioth2022path}?
    \item Can one exploit the variational structure to study well-posedness, structure-preserving numerical schemes, long-time behaviour and multiscale analysis for the WKE?
\end{itemize}
We hope that the new avenues of research will lead to deeper understanding of the WKE and wave turbulence theory.
\printbibliography
\end{document}